\def\ba{\begin{array}}
\def\ea{\end{array}}
\newcommand{\beq}{\begin{equation}}
\newcommand{\eeq}{\end{equation}}
\newcommand{\bq}{\begin{eqnarray}}
\newcommand{\eq}{\end{eqnarray}}
\newcommand{\bqn}{\begin{eqnarray*}}
\newcommand{\eqn}{\end{eqnarray*}}
\newcommand{\bee}{\begin{enumerate}}
\newcommand{\eee}{\end{enumerate}}
\newcommand{\bi}{\begin{itemize}}
\newcommand{\ei}{\end{itemize}}
\newcommand{\ii}{\mathbf{i}}
\newcommand{\re}{{\mathrm{Re}}}
\newcommand{\lin}{{\rm lin}}
\newcommand{\sh}{{\rm sh}}
\newcommand{\slow}[1]{\ifthenelse{\boolean{showcomments}}
{ \textcolor{red}{(SL:  #1)}}{}}
\newcommand{\fz}[1]{\ifthenelse{\boolean{showcomments}}
{ \textcolor{blue}{#1}}{}}
\theoremstyle{definition}
\newtheorem{theorem}{Theorem}
\newtheorem{lemma}{Lemma}
\theoremstyle{definition}
\newtheorem{remark}{Remark}
\begin{document}

\title{A Note on Branch Flow Models with Line Shunts}

\author{Fengyu~Zhou,~\IEEEmembership{Student Member,~IEEE,}
        and~Steven~H.~Low,~\IEEEmembership{Fellow,~IEEE}
\thanks{Zhou and Low are with the Electrical Engineering Department, California Institute of Technology, Pasadena, CA 91125 USA (e-mails: \{f.zhou; slow\}@caltech.edu)}
}

%
%


\maketitle

\begin{abstract}
When the shunt elements in the $\Pi$ circuit line model are assumed zero, it has been proved
that branch flow models are equivalent to bus injection models and that the second-order cone 
relaxation of optimal power flow problems on a radial network is exact under certain conditions. 
In this note we propose a branch flow model that includes nonzero line shunts and prove that 
the equivalence and the exactness of relaxation continue to hold under essentially the same 
conditions as for zero shunt elements.
\end{abstract}

\begin{IEEEkeywords}
Branch flow model, DistFlow equations, line shunt, SOCP relaxation
\end{IEEEkeywords}

\section{Introduction}

The DistFlow model is introduced in \cite{Baran1989a, Baran1989b} for radial networks.  
It is extended in \cite{Farivar-2013-BFM-TPS, Low2014a} to a branch flow model for 
general networks that may contain cycles.  The branch flow model is justified in \cite{Bose-2015-BFMe-TAC}
by proving its equivalence to the widely used bus injection AC power flow model.   For radial networks
the DistFlow equations are nonlinear and hence optimal power flow (OPF) problems formulated based on 
these equations are nonconvex.  Second-order cone programming (SOCP) relaxation is introduced in 
\cite{Farivar-2013-BFM-TPS} and a sufficient condition for the relaxation to be exact is proved there for 
radial networks.

All of these branch flow models in \cite{Baran1989a, Baran1989b, Farivar-2013-BFM-TPS, Bose-2015-BFMe-TAC, Low2014a}
 assume zero shunt elements (line charging) in the $\Pi$ circuit line model.
A branch flow model is recently proposed in \cite{Christakou2016} that includes
nonzero line shunts and a local algorithm is provided to compute a local optimal of OPF problems.  
Another branch flow model with nonzero line shunts is proposed in \cite{NickPaolone2018}.  Instead
of SOCP relaxation of the original OPF, \cite{NickPaolone2018} proposes to solve a more conservative 
approximation and provides a sufficient condition for the SOCP relaxation of the approximate OPF to be exact.

The purpose of this note is to propose an alternative branch flow model 
(equations \eqref{eq:bfm.10} for radial networks and \eqref{eq:bfm.9} for general networks)
that includes nonzero line shunts
(line charging in the $\Pi$ circuit model),
and prove that the equivalence and the exactness of second-order cone relaxation continue to hold under
essentially the same conditions as when line shunts are zero.  

This note only serves to complete the model and corresponding results in \cite{Farivar-2013-BFM-TPS, Bose-2015-BFMe-TAC}
by including line shunts.  It is \emph{not} our purpose to assess or extend the various advances since their publication.

\section{Background}

\noindent\emph{Notations.}
Let $\mathbb C$ denote the set of complex numbers and $\mathbb R$ the set of real numbers.
Let $\ii := \sqrt{-1}$.
For any $a\in\mathbb C$, $a^*$ denotes its complex conjugate.  
Unless otherwise specified a quantity $a$ denotes a vector whose $j$th entry is $a_j$, e.g., 
$s := (s_j, j\in N)$, $S := (S_{jk}, (j,k) \in L)$.
For real vectors $a$ and $b$, $a\leq b$ means $a_j\leq b_j$, $\forall j$.

Consider a single-phase power network with $N$ buses and $L$ lines modeled as a connected undirected 
graph $G(N, L)$ where $N := \{1, 2, \ldots, N \}$ and $L \subseteq  N \times  N$.\footnote{For simplicity we 
overload notations to use $N$ ($L$) to denote both the set and the number of buses (lines).   We also
 use $s_j$ and $S_{jk}$ below both as complex and real quantities interchangeably.  The meaning should be clear
from the context.}
This may represent the positive sequence network of a balanced three-phase system.
A line $(j, k)\in L$, or $j\sim k$, is represented by a $\Pi$ circuit model $(y^s_{jk}, y^m_{jk}, y^m_{kj})$ where 
$y^s_{jk} = y^s_{kj} \in\mathbb C$ are the series admittances of the line, $y^m_{jk} \in\mathbb C$ is the shunt 
admittance of the line at bus $j$, and $y^m_{kj}$ is the shunt admittance of the line at bus $k$.
Since the line model may include not just transmission lines, but other devices such as transformers, we do not
require $y^m_{jk}$ and $y^m_{kj}$ to be equal.

Let
\bi
\item $s_j := p_j + \ii q_j$ or $s_j := (p_j, q_j)$ denote the real and reactive power injections at bus $j$, either
	as a complex number or a pair of real numbers, $j\in N$.
\item $V_j$ denote the voltage phasor at bus $j$, $j\in N$.   
\ei
Without loss of generality, we fix bus $1$ as the slack bus and set $V_1 := 1\angle 0^\circ$.

A widely used power flow model, we called a bus injection model (BIM), is specified by the following set of equations 
that relates nodal injections $s$ and voltages $V$: for $j\in N$,
\bq
\!\!\!\!\!\!\!\!\!
s_j &\!\!\! = \!\!\! & \sum_{k: j\sim k} \left(y_{jk}^s\right)^* \!\! \left( |V_j|^2 - V_jV_k^* \right) \ + 
	\sum_{k: j\sim k} \left(y_{jk}^m\right)^* |V_j|^2
\label{eq:bim.1}
\eq
Let $\mathbb V$ denote the set of power flow solutions:
\bq
\mathbb V  :=  \{\ (s, V)\in\mathbb C^{2N} \, : \, (s, V) \text{ satisfies } \eqref{eq:bim.1},\ V_1 := 1\angle 0^\circ \ \}
\label{eq:bimsoln.1}
\eq

While the bus injection model \eqref{eq:bim.1} involves only nodal variables, branch flow models also involve 
branch variables.  Let
\bi
\item $I_{jk}$ denotes the phasor of the \emph{sending-end} current from bus $j$ to bus $k$, $(j,k)\in L$.
\item $S_{jk} := P_{jk} + \ii Q_{jk}$ or $S_{jk} := (P_{jk}, Q_{jk})$ denote the \emph{sending-end} real and 
	reactive power flows from bus $j$ to bus $k$, $(j,k)\in L$.
\ei

\vspace{0.1in}\noindent\textbf{DistFlow equations.}
When the shunt elements are assumed zero $y^m_{jk} = y^m_{kj} = 0$,  \cite{Baran1989a, Baran1989b} 
introduce a power flow model, called DistFlow equations, for single-phase radial networks 
(i.e., networks with a tree topology) and uses it to optimize 
the placement and sizing of switched capacitors in distribution circuits for volt/var control.  
DistFlow equations adopt a directed graph.
When the line is directed from bus $j$ to bus $k$, we refer to it by $j\rightarrow k \in L$.  
For each directed line $j\rightarrow k$, 
only the variable $S_{jk}$ is defined as the sending-end power flow from buses $j$ to $k$, but not the variable $S_{kj}$.  
The orientation of the graph can be arbitrary, but for ease of exposition, we sometimes assume without 
loss of generality that bus 1 is the root of the graph and every line points away from the root.  
Then the DistFlow equations are given by \cite{Baran1989a, Baran1989b}:\footnote{Here, the complex 
notation is used only as a shorthand for equations in the real domain, e.g., \eqref{eq:df.1b} denotes 
\bqn
v_j - v_k & = & 2\, \left(r_{jk}P_{jk} + x_{jk}Q_{jk}\right) - (r_{jk}^2+x_{jk}^2) \ell_{jk}
\eqn
}
\begin{subequations}
\bq
s_j & = & \sum_{k: j\rightarrow k} S_{jk} \ - \ \left( S_{ij} - z_{ij} \ell_{ij} \right),
	 \quad j \in  N
\label{eq:df.1a}
\\
v_j \ell_{jk} &  = & |S_{jk}|^2, 	 \qquad\qquad\qquad\quad\ \ \,  j\rightarrow k \in  L
\label{eq:df.1c}
\\
v_j - v_k &  = & 2\, \text{Re} \left(z_{jk}^* S_{jk} \right) - |z_{jk}|^2 \ell_{jk}, 
	\ \  j\rightarrow k \in L
\label{eq:df.1b}
\eq
\label{eq:df.1}
\end{subequations}
where $j$ is the parent of $k$ and 
 $z_{jk} := 1/y_{jk}^s$ is the series impedance of line $j\rightarrow k$.
Here the variable $v_j$ represents the squared voltage magnitude at bus $j$ and $\ell_{jk}$ represents the squared
current magnitude on line $j\rightarrow k$.
The key feature of the DistFlow equations is that they do not involve angles of voltage and current phasors
because these angles can be deduced from a solution $x := (s, v, \ell, S) \in\mathbb R^{3(N+L)}$ of the DistFlow 
equations \eqref{eq:df.1} when the network is radial, but not when the network contains cycles.

\vspace{0.1in}\noindent\textbf{Branch flow model.}
The model \eqref{eq:df.1} is extended to a branch flow model (BFM) in \cite{Farivar-2013-BFM-TPS, Low2014a}
for general networks that may
contain cycles by introducing a cycle condition.  Given any $x := (s, v, \ell, S)$ define 
\bq
\beta_{jk}(x) & := & \angle \left( v_j \ - \ z_{jk}^* S_{jk} \right), \quad j\rightarrow k \in L
\label{eq:defbeta.1}
\eq
and let $\beta(x) := (\beta_{jk}(x), j\rightarrow k \in L)$.  Even though a solution $x$ of the DistFlow equations
\eqref{eq:df.1} does not contain voltage phase angles, $\beta_{jk}(x)$ can be interpreted as the angle difference
across line $j\rightarrow k$ (also see Section \ref{sec:bfmws} below).
The DistFlow equations are extended in \cite{Farivar-2013-BFM-TPS} to general networks as:
\bq
\eqref{eq:df.1a}\eqref{eq:df.1c}\eqref{eq:df.1b}, \ \
\exists \theta\in\mathbb R^{N}\  \text{s.t.} \
\beta(x) \ = \ C^T\theta
\label{eq:df.2}
\eq
where $C$ is the $N \times L$ incidence matrix of the directed graph with
$C_{jl} = 1$ if $l = (j\rightarrow k)$ for some $k$, $-1$ if $l=i\rightarrow j$ for some $i$, and 0 otherwise. 
We refer to the condition $\beta(x)=C^T\theta$ on $x$ in \eqref{eq:df.2} as the \emph{cycle condition}.
For general networks the DistFlow equations \eqref{eq:df.1} can thus be interpreted as a relaxation of 
the branch flow model \eqref{eq:df.2} where the cycle condition is ignored.  
When a network is radial the cycle condition is vacuous and \eqref{eq:df.2} reduces to \eqref{eq:df.1}.

\vspace{0.1in}\noindent\textbf{Equivalence and SOCP relaxation.}
A justification of the branch flow model \eqref{eq:df.2}, and hence \eqref{eq:df.1}, is that they are equivalent 
to the widely used bus injection model \eqref{eq:bim.1}.  Specifically it is proved in \cite{Bose-2015-BFMe-TAC}
that, when shunt elements are assumed zero $y^m_{jk} = y^m_{kj} = 0$,
there is a bijection between the set $\mathbb V$ of solutions $(s, V)\in \mathbb C^{2N}$ and the set of solutions
$x\in\mathbb R^{3(N+L)}$ of \eqref{eq:df.2} for general networks and the set of solutions $x$ of
\eqref{eq:df.1} for radial networks.

The DistFlow equations \eqref{eq:df.1a}\eqref{eq:df.1b} are linear in its variable $x$
but \eqref{eq:df.1c} is quadratic.  Optimal power flow (OPF) problems formulated using the DistFlow
equations are therefore nonconvex.  Second-order cone program (SOCP) relaxation is introduced in
\cite{Farivar-2013-BFM-TPS} where \eqref{eq:df.1c} is relaxed to the convex constraint:
\bq
v_j \ell_{jk} &  \geq & |S_{jk}|^2, 	 \quad j\rightarrow k \in  L
\label{eq:soc.1}
\eq
The SOCP relaxation of an OPF problem is called exact if every solution of the relaxation
attains equality in \eqref{eq:soc.1} and hence is optimal for the original OPF problem.
It is proved in \cite{Farivar-2013-BFM-TPS} that, for radial networks, SOCP relaxation is exact if the
injections are not lower bounded.

\begin{remark}
The models in \cite{Baran1989a, Baran1989b} and \cite{Farivar-2013-BFM-TPS, Low2014a}
allow a nodal
shunt element at a bus $j$.  This can either be modeled as an injection $s_j^{\sh}$ as done in 
\cite{Baran1989a, Baran1989b} for volt/var control, in which case \eqref{eq:df.1a} is modified to:
\bqn
s_j \ + \ s_j^{\sh} & = & \sum_{k: j\rightarrow k} S_{jk} \ - \ \left( S_{ij} - z_{ij} \ell_{ij} \right)
\eqn
or as an admittance load $y^{\sh}_j$ as done in \cite{Farivar-2013-BFM-TPS}, in which case 
\eqref{eq:df.1a} is modified to:
\bqn
s_j  & = & \sum_{k: j\rightarrow k} S_{jk} \ - \ \left( S_{ij} - z_{ij} \ell_{ij} \right) \ + \ y_j^{\sh}v_j
\eqn
In this note we ignore these nodal shunt elements for simplicity, but all results will hold with 
straightforward modifications when they are included.
\qed
\end{remark}

The models in \cite{Baran1989a, Baran1989b} and \cite{Farivar-2013-BFM-TPS, Low2014a}, 
however,
assume zero shunt elements (line charging) $y^m_{jk} = y^m_{kj} = 0$ in the $\Pi$ circuit model.  
In the rest of this note we 
generalize the branch flow models \eqref{eq:df.1} and \eqref{eq:df.2} to include line shunts,
i.e., when  $y^m_{jk}$ and $y^m_{kj}$ are nonzero, and show that the results in \cite{Farivar-2013-BFM-TPS}
\cite{Bose-2015-BFMe-TAC} on equivalence and exact SOCP relaxation continue to hold under
essentially the same conditions as when $y^m_{jk} = y^m_{kj} = 0$.

\section{Branch flow model with line shunts}
\label{sec:bfmws}

Following \cite{Christakou2016} we consider the following branch flow model with line shunt elements
in complex form (adopting an undirected graph):
\begin{subequations}
\bq
s_j & = & \sum_{k: j\sim k} S_{jk}, \qquad  j \in N
\label{eq:bfm.7a}
\\
S_{jk}  & = & V_j \, I_{jk}^*,   \quad   S_{kj}  \ \ = \ \ V_k \, I_{kj}^*, 
\quad\ \ (j,k)\in L
\label{eq:bfm.7b}
\\
I_{jk}  & = &  y^s_{jk} (V_j - V_k) \ + \ y^m_{jk}V_j, 
\qquad (j,k)\in L
\label{eq:bfm.7c}
\\ 
I_{kj}  & = & y^s_{kj} (V_k - V_j) \ + \ y^m_{kj}V_k,  
\qquad (j,k)\in L
\label{eq:bfm.7d}
\eq
\label{eq:bfm.7}
\end{subequations}
where \eqref{eq:bfm.7a} imposes power balance at each bus, \eqref{eq:bfm.7b} defines branch power 
in terms of the associated voltage and current, and  \eqref{eq:bfm.7c}\eqref{eq:bfm.7d} describes the 
Kirchhoff's laws.
The main difference from the model in \cite{Farivar-2013-BFM-TPS} is the use of undirected rather
than directed graph when shunt elements are included so that line currents and power flows are defined
in both directions. 
Note that with shunt element, $S_{jk}$ and $\ell_{jk}$ are defined as the power flow and squared current at the terminals as shown in Fig \ref{fig:model}.

\begin{figure}
\centering
\includegraphics[width=0.9\columnwidth]{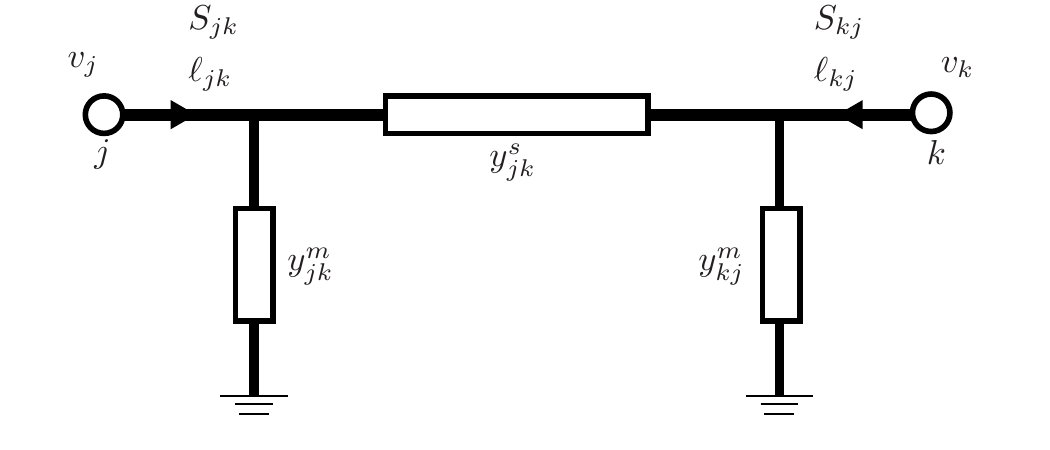}
\caption{The $\Pi$ circuit model, line parameters, and variables in BFMs.
}
\label{fig:model}
\end{figure}

Following \cite{Christakou2016}, we define
\bqn
\alpha_{jk} & := &  1 + z_{jk}^s\, y^m_{jk}, \qquad (j,k)\in L
\\
\alpha_{kj} & := &  1 + z_{kj}^s\, y^m_{kj}, \qquad (j,k)\in L
\eqn
where $z_{jk}^s := \left( y_{jk}^s\ \right)^{-1} = \left( y_{kj}^s \right)^{-1} =: z_{kj}^s$.  
Note that $\alpha_{jk} = \alpha_{kj}$ if and only if $y_{jk}^m = y_{kj}^m$ and 
$\alpha_{jk} = \alpha_{kj} = 1$ if and only if $y_{jk}^m = y_{kj}^m = 0$ as $|z^s_{jk}|\neq 0$.
Since (from \eqref{eq:bfm.7b}\eqref{eq:bfm.7c})
\bqn
S_{jk} & = & \left( y_{jk}^s + y_{jk}^m \right)^* |V_j|^2 \ - \ \left( y_{jk}^s \right)^*\, V_j\, V_k^*
\eqn
We have 
\begin{subequations}
\bq
V_j\, V_k^* & = & \alpha_{jk}^*\, |V_j|^2 \ - \ \left( z_{jk}^s \right)^*\, S_{jk}
\label{eq:VjVk*.1a}
\eq
Similarly we have
\bq
V_k\, V_j^* & = & \alpha_{kj}^*\, |V_k|^2 \ - \ \left( z_{kj}^s \right)^*\, S_{kj}
\label{eq:VjVk*.1b}
\eq
\label{eq:VjVk*.1}
\end{subequations}
This motivates the following generalization of \eqref{eq:df.1} as a branch flow model for radial networks
with shunt elements (we sometimes write $z_{jk}$ and $y_{jk}$ in place of $z_{jk}^s$ and $y_{jk}^s$ respectively
when there is no confusion):
for all $j\in N$ and $(j,k)\in L$,
\begin{subequations}
\bq
s_j & \!\!\!\!  =  \!\!\!\! & \sum_{k: j\sim k} S_{jk}
\label{eq:bfm.10a}
\\
v_j \, \ell_{jk}  &  \!\!\!\!  =  \!\!\!\!  & \left| S_{jk}\right|^2, \quad
v_k \, \ell_{kj}  \ = \ \left| S_{kj}\right|^2, \quad
\label{eq:bfm.10d}
\\
\!\!\!\!\!\!\!\!
\left|\alpha_{jk}\right|^2 v_j - v_k &  \!\!\!\!  =  \!\!\!\!  & 
		2\, \text{Re} \left(\alpha_{jk}\, z_{jk}^* \, S_{jk}\right) \ - \ \left| z_{jk} \right|^2 \ell_{jk}
\label{eq:bfm.10b}
\\
\!\!\!\!\!\!\!\!
\left|\alpha_{kj}\right|^2 v_k - v_j &  \!\!\!\!  =  \!\!\!\!  & 
		2\, \text{Re} \left(\alpha_{kj}\, z_{kj}^* \, S_{kj}\right) \ - \ \left| z_{kj} \right|^2 \ell_{kj}
\label{eq:bfm.10c}
\\
\!\!\!\!\!\!\!\!
\alpha_{jk}^*\, v_j \ - \  z_{jk}^*\, S_{jk} &  \!\!\!\!  =  \!\!\!\!  & \left( \alpha_{kj}^*\, v_k \ - \ z_{kj}^*\, S_{kj} \right)^*
\label{eq:bfm.10e}
\eq
\label{eq:bfm.10}
\end{subequations}

\begin{lemma}\label{lem:Svl}
When \eqref{eq:bfm.10b}\eqref{eq:bfm.10c}\eqref{eq:bfm.10e} hold, we have
\begin{align}\label{eq:Svl}
|S_{jk}|^2-v_j \ell_{jk} = |S_{kj}|^2-v_k \ell_{kj}.
\end{align}
\end{lemma}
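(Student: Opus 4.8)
The plan is to compare the squared magnitudes of the two sides of the cycle-type identity \eqref{eq:bfm.10e}. Set $A := \alpha_{jk}^*\, v_j - z_{jk}^*\, S_{jk}$ and $B := \alpha_{kj}^*\, v_k - z_{kj}^*\, S_{kj}$. Since \eqref{eq:bfm.10e} asserts $A = B^*$, we immediately get $|A|^2 = |B|^2$. The idea is that, after expansion, each of $|A|^2$ and $|B|^2$ equals $v_jv_k$ plus $|z_{jk}|^2$ times the quantity on the corresponding side of \eqref{eq:Svl}; equating and cancelling then yields the claim.

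First I would expand $|A|^2 = (\alpha_{jk}^* v_j - z_{jk}^* S_{jk})(\alpha_{jk} v_j - z_{jk} S_{jk}^*)$, using that $v_j$ is real, to obtain $|\alpha_{jk}|^2 v_j^2 - 2 v_j\,\re\!\left(\alpha_{jk} z_{jk}^* S_{jk}\right) + |z_{jk}|^2 |S_{jk}|^2$. The key substitution is \eqref{eq:bfm.10b}, rearranged to $2\,\re\!\left(\alpha_{jk} z_{jk}^* S_{jk}\right) = |\alpha_{jk}|^2 v_j - v_k + |z_{jk}|^2 \ell_{jk}$. Plugging this in, the $|\alpha_{jk}|^2 v_j^2$ terms cancel and we are left with $|A|^2 = v_j v_k + |z_{jk}|^2\!\left(|S_{jk}|^2 - v_j \ell_{jk}\right)$. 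Repeating the identical computation with \eqref{eq:bfm.10c} in place of \eqref{eq:bfm.10b} gives $|B|^2 = v_j v_k + |z_{kj}|^2\!\left(|S_{kj}|^2 - v_k \ell_{kj}\right)$.

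Finally I would equate $|A|^2 = |B|^2$, cancel the common term $v_j v_k$, and invoke $z_{jk} = z_{kj}$ together with $|z_{jk}| \neq 0$ (so $|z_{jk}|^2 = |z_{kj}|^2 > 0$) to divide through and conclude \eqref{eq:Svl}. Note that \eqref{eq:bfm.10d} is never used, which is consistent with the statement since the common value in \eqref{eq:Svl} need not be zero. The computation is elementary; the only point requiring care is to track that $v_j$, $v_k$, $\ell_{jk}$, $\ell_{kj}$ are real, so that the two cross terms in each expansion genuinely combine into a single $2\,\re(\cdot)$ matching the real-part term in \eqref{eq:bfm.10b}--\eqref{eq:bfm.10c}. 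I do not anticipate any obstacle beyond this bookkeeping.
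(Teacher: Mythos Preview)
Your proposal is correct and follows essentially the same approach as the paper: take squared magnitudes on both sides of \eqref{eq:bfm.10e}, expand, and then substitute \eqref{eq:bfm.10b} and \eqref{eq:bfm.10c} to eliminate the $\re(\cdot)$ cross terms. Your write-up is in fact slightly more explicit than the paper's, carrying the substitution through to the clean form $v_jv_k + |z_{jk}|^2(|S_{jk}|^2 - v_j\ell_{jk})$ and noting that $|z_{jk}|=|z_{kj}|\neq 0$ is needed to cancel.
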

\begin{proof}
Taking the squared magnitude on both sides of \eqref{eq:bfm.10e}, we obtain
\begin{align}\label{eq:mag}
\nonumber
&|\alpha_{jk}|^2v_j^2 + |z_{jk}|^2|S_{jk}|^2-2\re(\alpha_{jk}z_{jk}^*S_{jk})v_j\\
=&|\alpha_{kj}|^2v_k^2 + |z_{kj}|^2|S_{kj}|^2-2\re(\alpha_{kj}z_{kj}^*S_{kj})v_k
\end{align}
Together with \eqref{eq:bfm.10b} and \eqref{eq:bfm.10c}, we obtain \eqref{eq:Svl}.
\end{proof}

Lemma \ref{lem:Svl} shows that for radial networks, when we have \eqref{eq:bfm.10b}\eqref{eq:bfm.10c}\eqref{eq:bfm.10e} as constraints, 
only one direction in \eqref{eq:bfm.10d} is required to fully describe power flow equations. 
In practice, one could use either direction for line $j\sim k$ to simply the computation.

For general networks that may contain cycles, \eqref{eq:VjVk*.1} suggests extending 
the definition \eqref{eq:defbeta.1} of $\beta(x)$ to both directions of a line as follows:
\begin{subequations}
\bq
\!\!\!\!\!\!\!
\beta_{jk}( x) &\!\!\! := \!\!\! & \angle \left( \alpha_{jk}^*\, v_j -  z_{jk}^*\, S_{jk} \right),  \ \   (j, k)\in L
\label{ch:npfm; eq:Defbeta.1a}
\\
\!\!\!\!\!\!\!
\beta_{kj}( x) &\!\!\! := \!\!\! & \angle \left( \alpha_{kj}^*\, v_k -  z_{jk}^*\, S_{kj} \right), \ \   (j, k)\in L
\label{ch:npfm; eq:Defbeta.1b}
\eq
\label{ch:npfm; eq:Defbeta.1}
\end{subequations}
To generalize \eqref{eq:df.2}, fix an arbitrary graph orientation represented by the incidence 
matrix $C$ and label the lines such that entries $\beta_{jk}(x)$ in the first half of the vector 
$\beta(x) := (\beta_{jk}(x), \beta_{kj}(x), j\rightarrow k\in L)$ correspond to lines in the same direction as
specified in $C$, and entries $\beta_{kj}(x)$ in the second half of $\beta(x)$ correspond to
lines in the opposite direction.
Then we propose the following generalization of \eqref{eq:df.2} as a branch flow model for 
general networks with shunt elements: for all $j\in N$ and $(j,k)\in L$,
\begin{subequations}
\bq
& &
\eqref{eq:bfm.10a}\eqref{eq:bfm.10d}\eqref{eq:bfm.10b}\eqref{eq:bfm.10c}
\label{eq:bfm.9a}
\\
&& \exists \theta\in\mathbb R^{N} \ \ \text{s.t.} \ \
	\beta(x) \ = \ \begin{bmatrix} C^T \\ - C^T \end{bmatrix} \theta
\label{eq:bfm.9b}
\eq
\label{eq:bfm.9}
\end{subequations}
where the components $(\beta_{jk}(x), \beta_{kj}(x))$ of the vector $\beta(x)$ are ordered
such that \eqref{eq:bfm.9b} implies 
\bqn
\beta_{jk}(x) & = & \theta_j - \theta_k \ \ = \ \  - \beta_{kj}(x).
\eqn
As \eqref{eq:bfm.10d}\eqref{eq:bfm.10b}\eqref{eq:bfm.10c} imply \eqref{eq:mag}, they also imply that
the magnitudes are equal on both sides of \eqref{eq:bfm.10e}.
Hence \eqref{eq:bfm.10e} {can be} replaced by the cycle condition \eqref{eq:bfm.9b} for general
networks.
While \eqref{eq:bfm.10e} is linear in the variable $x$ the cycle condition \eqref{eq:bfm.9b}
is nonlinear in $x$.   This is the major simplification of radial networks.

When the lines are modeled by series admittances without shunt elements,
$y^m_{jk} = y^m_{kj} = 0$, then $\alpha_{jk}=\alpha_{kj}=1$ and \eqref{eq:bfm.10} and \eqref{eq:bfm.9} 
can be shown to reduce to \eqref{eq:df.1} and \eqref{eq:df.2} respectively.

\section{Equivalence and exact SOCP relaxation}

\subsection{Equivalence}

Even though BIM \eqref{eq:bim.1} and BFMs \eqref{eq:bfm.10} and \eqref{eq:bfm.9}
are defined by different sets of equations in terms of their own variables, all of them are models of the Kirchhoff's laws
and the Ohm's law 
and therefore must be related.  We now clarify the precise sense in which these mathematical models are equivalent.

Let the sets of solutions of BFMs be:
\bqn
{\mathbb X}_\text{tree}:=\{ x := (s, v, \ell, S) \in \mathbb R^{3(N+2L)}  
				\ \vert \ x \text{ satisfies } \eqref{eq:bfm.10}, v_1=1  \}~
\\
{\mathbb X}_\text{mesh}:=\{ x := (s, v, \ell, S) \in \mathbb R^{3(N+2L)}  
				\ \vert \ x \text{ satisfies } \eqref{eq:bfm.9}, v_1=1  \}
\eqn
Two sets $A$ and $B$ are said to be \emph{equivalent}, denoted  by $A\equiv B$, if
there is a bijection between them. 
All proofs in this section are deferred to Section \ref{sec:proofs}.
\begin{theorem}
\label{thm:equiv}
$\mathbb V \equiv \mathbb X_\text{mesh} \subseteq \mathbb X_\text{tree}$.  
For radial networks $\mathbb V \equiv \mathbb X_\text{mesh} = \mathbb X_\text{tree}$. 
\end{theorem}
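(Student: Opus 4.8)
The plan is to construct an explicit bijection $\Phi:\mathbb V\to\mathbb X_{\text{mesh}}$ and then prove the two set inclusions separately: $\mathbb X_{\text{mesh}}\subseteq\mathbb X_{\text{tree}}$ for arbitrary topology, and $\mathbb X_{\text{tree}}\subseteq\mathbb X_{\text{mesh}}$ when the graph is a tree. For the bijection, given $(s,V)\in\mathbb V$ set $v_j:=|V_j|^2$ and, for each $(j,k)\in L$, $I_{jk}:=y^s_{jk}(V_j-V_k)+y^m_{jk}V_j$, $S_{jk}:=V_jI_{jk}^*$, $\ell_{jk}:=|I_{jk}|^2$, and symmetrically $I_{kj},S_{kj},\ell_{kj}$; this is precisely model \eqref{eq:bfm.7}, so it produces some $x=(s,v,\ell,S)$ with $v_1=1$. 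I would then check $x$ satisfies \eqref{eq:bfm.9}: \eqref{eq:bfm.10a} is \eqref{eq:bfm.7a}, which is \eqref{eq:bim.1}; \eqref{eq:bfm.10d} holds by construction; \eqref{eq:bfm.10b}\eqref{eq:bfm.10c} follow by taking the squared modulus of the identities \eqref{eq:VjVk*.1} (which read $V_jV_k^*=\alpha_{jk}^*v_j-z_{jk}^*S_{jk}$ and its twin), substituting \eqref{eq:bfm.10d}, and dividing by $v_j$ (the $v_j=0$ case checked directly); and the cycle condition \eqref{eq:bfm.9b} holds with $\theta_j:=\angle V_j$ since \eqref{eq:VjVk*.1} gives $\beta_{jk}(x)=\angle(V_jV_k^*)=\theta_j-\theta_k=-\beta_{kj}(x)$.

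For the inverse, given $x\in\mathbb X_{\text{mesh}}$ I would first establish $v\ge0$. Writing $w_{jk}:=\alpha_{jk}^*v_j-z_{jk}^*S_{jk}$, a short computation from \eqref{eq:bfm.10b}\eqref{eq:bfm.10d} (of the type appearing in the proof of Lemma~\ref{lem:Svl}) gives $|w_{jk}|^2=v_jv_k$, so $v_jv_k\ge0$ on every line; since the network is connected and $v_1=1$, walking along paths from bus~$1$ yields $v_j\ge0$ everywhere. Next, use \eqref{eq:bfm.9b} to pick $\theta$ with $\beta(x)=[C^T;\,-C^T]\theta$, normalized by $\theta_1:=0$ (then unique on the connected graph), and set $V_j:=\sqrt{v_j}\,e^{\ii\theta_j}$. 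Then $V_jV_k^*=\sqrt{v_jv_k}\,e^{\ii\beta_{jk}(x)}=w_{jk}$ by the definition \eqref{ch:npfm; eq:Defbeta.1a} of $\beta_{jk}$; solving this for $S_{jk}$ and using $z^s_{jk}=(y^s_{jk})^{-1}$ and $\alpha_{jk}=1+z^s_{jk}y^m_{jk}$ recovers $S_{jk}=V_j\big(y^s_{jk}(V_j-V_k)+y^m_{jk}V_j\big)^*$, i.e.\ \eqref{eq:bfm.7b}\eqref{eq:bfm.7c}; the same for the reverse direction, and then \eqref{eq:bfm.10a} gives \eqref{eq:bim.1}, so $(s,V)\in\mathbb V$. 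That $\Phi$ and this map are mutually inverse is routine bookkeeping: $\angle V_j=\theta_j$ is forced up to the additive constant, which is pinned down since $V_1=1\angle0^\circ\iff(v_1,\theta_1)=(1,0)$, and $\ell_{jk}$ is forced to $|I_{jk}|^2$ by \eqref{eq:bfm.10d} where $v_j>0$ and by \eqref{eq:bfm.10b} at any bus where $v_j=0$.

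For $\mathbb X_{\text{mesh}}\subseteq\mathbb X_{\text{tree}}$ I only have to recover \eqref{eq:bfm.10e} from \eqref{eq:bfm.9}: as already noted after \eqref{eq:bfm.9}, \eqref{eq:bfm.10d}\eqref{eq:bfm.10b}\eqref{eq:bfm.10c} imply $|w_{jk}|=|w_{kj}|$, while the cycle condition \eqref{eq:bfm.9b}, with the ordering convention that makes $\beta_{jk}(x)=\theta_j-\theta_k=-\beta_{kj}(x)$, gives $\angle w_{jk}=-\angle w_{kj}=\angle w_{kj}^*$; equal moduli and equal arguments yield $w_{jk}=w_{kj}^*$, which is \eqref{eq:bfm.10e} (the case $w_{jk}=0$ being trivial). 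For the radial case I then need $\mathbb X_{\text{tree}}\subseteq\mathbb X_{\text{mesh}}$, i.e.\ that \eqref{eq:bfm.10} forces the cycle condition on a tree: root the tree at bus~$1$, put $\theta_1:=0$, and set $\theta_k:=\theta_j-\beta_{jk}(x)$ along each line $j\to k$ pointing away from the root — well defined precisely because a tree has no cycle; then $\beta_{jk}(x)=\theta_j-\theta_k$ on the chosen orientation, and \eqref{eq:bfm.10e} (equivalently $w_{jk}=w_{kj}^*$, hence $\beta_{kj}(x)=-\beta_{jk}(x)$) supplies the remaining entries, so $\beta(x)=[C^T;\,-C^T]\theta$ and $x\in\mathbb X_{\text{mesh}}$. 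Combined, this gives $\mathbb X_{\text{mesh}}=\mathbb X_{\text{tree}}$ for radial networks.

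The main obstacle is not a single step but the sign/reference bookkeeping that ties the real variables $(v,\ell)$ and the angle $\theta$ back to the complex variables: proving $v\ge0$ so that $\sqrt{v_j}$ is legitimate (this is where connectedness and $v_1=1$ are genuinely used, through $|w_{jk}|^2=v_jv_k$), choosing the branch of $\angle$ and the additive constant in $\theta$ consistently with $V_1=1\angle0^\circ$ so that $\Phi$ and its inverse actually compose to the identity, and handling degenerate buses with $v_j=0$ where the phase is undetermined by the cycle condition but $V_j=0$ regardless. Everything else reduces to algebra with \eqref{eq:VjVk*.1} and the definitions of $\alpha_{jk}$ and $\beta_{jk}$.
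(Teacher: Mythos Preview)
Your proposal is correct and follows essentially the same approach as the paper: the paper factors the bijection through an intermediate complex-variable set $\tilde{\mathbb X}$ (solutions of \eqref{eq:bfm.7}) while you compose the two steps into one direct map, and your recursive angle assignment on the tree is equivalent to the paper's matrix formula $\theta = C(C^TC)^{-1}\tilde\beta(x)$, but the algebraic core---the identity $|w_{jk}|^2=v_jv_k$, the reconstruction $V_j=\sqrt{v_j}\,e^{\ii\theta_j}$, and the verification of \eqref{eq:bfm.7b} via \eqref{eq:a*v-z*S}---is identical. If anything, you are more careful than the paper about the nonnegativity of $v$ and the degenerate $v_j=0$ cases, which the paper passes over silently.
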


\begin{remark}
Given the bijection between the solution sets of BIM and BFMs, any result in one model is in principle derivable in the other.   
Some results however are much easier to state or prove in one model than the other.
For instance BIM, which is widely used in transmission network problems, allows a
much cleaner formulation of the semidefinite program
(SDP) relaxation.\footnote{To be consistent, we require the cost function $f_\text{bim}$ in BIM and the cost function 
$f_\text{bfm}$ in BFMs to satisfy $f_\text{bim}(h(x)) = f_\text{bfm}(x)$ where $h$ is the bijection from a BFM to BIM.}
For radial networks, BFMs have a convenient recursive structure that allows a more efficient 
computation of power flows and leads to a useful linear approximation.
They also seem to be much more stable numerically than BIM as the network size scales up.
One should therefore freely use either model depending on
which is more convenient for the problem at hand.  
\qed
\end{remark}

\subsection{Exact SOCP relaxation for radial networks}

Consider a radial network and the following OPF problem: 
\begin{subequations}
\bq
\!\!\!\!\!\!\!\!\!\!\!
\min_x \ f(x) & \text{ s. t. } & x\in \mathbb X_\text{tree}
\label{eq:opf.1a}
\\
& & \underline s \ \leq \ s \ \leq \ \overline s
\label{eq:opf.1b}
\\
& & \underline v \ \leq \ v \ \leq \ \overline v
\label{eq:opf.1c}
\\
& & 0\ \leq \ \ell \ \leq \ \overline \ell
\label{eq:opf.1d}
\eq
\label{eq:opf.1}
\end{subequations}
We make the following assumptions:
\bi
\item[A1] The network graph $G=(N,L)$ is connected.
\item[A2] The cost function $f$ is strictly increasing in $\ell$, nondecreasing in $s$ and independent
	of $S$.
\item[A3] The OPF problem \eqref{eq:opf.1} is feasible.   
\ei
Common cost functions that satisfy A2 include those that minimize real power loss or minimize real power 
generation.

All the constraints in \eqref{eq:opf.1} are linear in $x$ except the quadratic equality \eqref{eq:bfm.10d}.
Following  \cite{Farivar-2013-BFM-TPS} we relax \eqref{eq:bfm.10d} to a second-order cone 
constraint: for $(j,k)\in L$,
\bq
\left| S_{jk}\right|^2  &  \!\!\!\!  \leq  \!\!\!\!  & v_j \, \ell_{jk}, \quad
\left| S_{kj}\right|^2    \ \leq  \ v_k \, \ell_{kj}  
\label{eq:soc.2}
\eq
Let
\bqn
{\mathbb X}_\text{soc}:=  \{ x \in \mathbb R^{3(N+2L)}  
				\ \vert \ x \text{ satisfies } \eqref{eq:bfm.10a}\eqref{eq:soc.2}
				 	\eqref{eq:bfm.10b}-\eqref{eq:bfm.10e} , v_1=1 \}
\eqn
The SOCP relaxation of OPF \eqref{eq:opf.1} is:
\begin{subequations}
\bq
\!\!\!\!\!\!\!\!\!\!\!
\min_x \ f(x) & \text{ s. t. } & x\in \mathbb X_\text{soc}
\label{eq:socp.1a}
\\
& & \underline s \ \leq \ s \ \leq \ \overline s
\label{eq:socp.1b}
\\
& & \underline v \ \leq \ v \ \leq \ \overline v
\label{eq:socp.1c}
\\
& & 0  \ \leq \ \ell \ \leq \ \overline \ell
\label{eq:socp.1d}
\eq
\label{eq:socp.1}
\end{subequations}

\vspace{-0.1in}
Consider the following conditions.
\bi
\item[C1] $\underline s_j = (-\infty, -\infty)$ for all buses $j\in N$.
\item[C2] Both $\re(\alpha_{jk})$ and $\re(\alpha_{kj})$ are strictly positive for $j\sim k$.
\ei
The next result provides a sufficient condition for exact SOCP relaxation.
\begin{theorem}
\label{thm:exactSOCP}
If C1 and C2 hold, then every optimal solution 
of the SOCP relaxation \eqref{eq:socp.1} is in $\mathbb X_\text{tree}$ and hence
optimal for OPF \eqref{eq:opf.1}.
\end{theorem}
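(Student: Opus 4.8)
The plan is to argue by contradiction, using the fact that $\mathbb X_\text{tree}\subseteq\mathbb X_\text{soc}$ and that \eqref{eq:opf.1} and \eqref{eq:socp.1} carry the same box constraints, so that \eqref{eq:socp.1} is a relaxation of \eqref{eq:opf.1}. It therefore suffices to show that an optimal solution $x^\star=(s^\star,v^\star,\ell^\star,S^\star)$ of \eqref{eq:socp.1} cannot have a strict inequality in \eqref{eq:soc.2} on any line: then $x^\star$ satisfies \eqref{eq:bfm.10d}, hence lies in $\mathbb X_\text{tree}$, hence is feasible and therefore optimal for \eqref{eq:opf.1}. Suppose instead some line is strict. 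By Lemma \ref{lem:Svl} the defect $|S_{jk}|^2-v_j\ell_{jk}$ agrees for the two orientations of a line, so there is a line $(i,j)\in L$ that is strict in \eqref{eq:soc.2} in \emph{both} directions; orient the tree with bus $1$ as root, let $i$ be the parent of $j$, and let $T_j$ be the pendant subtree below $j$. I will produce a feasible point $x'$ of \eqref{eq:socp.1} with $f(x')<f(x^\star)$.

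The building block is a perturbation localised on line $(i,j)$: hold all voltages, all other lines, and all injections except $s_i,s_j$ fixed, and move $(S_{ij},S_{ji},\ell_{ij},\ell_{ji},s_i,s_j)$ along a ray $x^\star+t\,\Delta x$, $t\ge 0$. Choose the complex increment $\Delta S_{ij}$ so that $\re(\alpha_{ij}z_{ij}^*\Delta S_{ij})<0$ and $\re(\alpha_{ji}z_{ji}^*\Delta S_{ji})<0$, where $\Delta S_{ji}$ is the reverse-flow increment forced by \eqref{eq:bfm.10e} once $v_i,v_j$ and $\Delta S_{ij}$ are fixed. This set of admissible $\Delta S_{ij}$ is a nonempty open cone precisely because C2 prevents its two defining half-spaces from being opposite --- their normals are proportional to $\alpha_{ij}^*z_{ij}$ and $\alpha_{ji}z_{ij}$, which cannot be antiparallel when $\re\alpha_{ij}$ and $\re\alpha_{ji}$ are both positive. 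Fix $\Delta\ell_{ij},\Delta\ell_{ji}$ by requiring \eqref{eq:bfm.10b},\eqref{eq:bfm.10c} to persist (the sign choices make both strictly negative) and $\Delta s_i:=\Delta S_{ij}$, $\Delta s_j:=\Delta S_{ji}$ so that \eqref{eq:bfm.10a} persists. For small $t>0$: \eqref{eq:bfm.10a}--\eqref{eq:bfm.10e} hold; \eqref{eq:soc.2} holds on $(i,j)$ because its slack was strictly positive at $x^\star$ and is untouched on all other lines; the bounds on $v$ and $\ell$ hold since $v$ is unchanged and $\ell$ only decreases; and the lower bounds on $s$ are inactive by C1. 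Since $f$ is independent of $S$, nondecreasing in $s$ and strictly increasing in $\ell$ (A2), we obtain $f(x^\star+t\,\Delta x)<f(x^\star)$, provided the increments $\Delta s_i,\Delta s_j$ do not violate the \emph{upper} bounds on $s_i,s_j$.

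To deal with that last proviso, if an upper bound on $s_i$ or $s_j$ is active at $x^\star$ one absorbs the offending component of $\Delta S_{ij}$ (respectively $\Delta S_{ji}$) by splitting it among the remaining lines at bus $i$ (respectively by pushing it into $T_j$), re-solving \eqref{eq:bfm.10b}--\eqref{eq:bfm.10e} and the $\ell$-variables line by line away from the root. Here C2 again pins the signs of the induced $\ell$- and injection-increments, so that no $\ell$-component increases and no injection leaves its box; since the $\ell$-increments along the branch remain negative the strict cost decrease survives. This contradicts optimality of $x^\star$, so every optimal solution of \eqref{eq:socp.1} lies in $\mathbb X_\text{tree}$ and is optimal for \eqref{eq:opf.1}.

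I expect the third paragraph to be the main obstacle: carrying out the redistribution through $T_j$ (and around bus $i$) so that it remains consistent with \eqref{eq:bfm.10b}--\eqref{eq:bfm.10e}, keeps every affected $\ell$ strictly decreasing, and moves injections only in directions their boxes permit. This is the shunt-aware counterpart of the exactness argument for the shunt-free DistFlow equations in \cite{Farivar-2013-BFM-TPS}: C1 removes the obstruction coming from injection lower bounds exactly as there, while C2 --- which is vacuous when $y^m_{jk}=y^m_{kj}=0$, since then $\alpha_{jk}=\alpha_{kj}=1$ --- is what keeps the new $\alpha$-dependent sign bookkeeping under control.
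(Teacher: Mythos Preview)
Your overall strategy---contradiction via a local perturbation on a line where \eqref{eq:soc.2} is slack, using Lemma~\ref{lem:Svl} to get slack in both directions---is exactly right and matches the paper. The divergence is in the second and third paragraphs.

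In paragraph~2 you treat $\Delta S_{ij}$ as an unknown to be chosen from an open cone, and this generality is what forces the complication in paragraph~3. The paper avoids this entirely by picking one explicit direction:
\[
\Delta S_{jk}=\Delta S_{kj}=-\tfrac{\varepsilon}{2}\,z_{jk},\qquad
\Delta\ell_{jk}=-\re(\alpha_{jk})\,\varepsilon,\qquad
\Delta\ell_{kj}=-\re(\alpha_{kj})\,\varepsilon,\qquad
\Delta s_j=\Delta s_k=-\tfrac{\varepsilon}{2}\,z_{jk}.
\]
With this choice \eqref{eq:bfm.10e} is preserved because $-z_{jk}^*\Delta S_{jk}=|z_{jk}|^2\varepsilon/2$ is real, \eqref{eq:bfm.10b}--\eqref{eq:bfm.10c} are preserved by direct substitution, and C2 makes both $\Delta\ell$'s strictly negative. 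Crucially, since $z_{jk}$ has nonnegative real and imaginary parts for a physical line, $\Delta s_j$ and $\Delta s_k$ are componentwise nonpositive, so only \emph{lower} bounds on $s$ could bind---and C1 removes those. The upper bounds on $s$ are never touched, so your third paragraph is unnecessary.

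As written, your third paragraph is a genuine gap: redistributing $\Delta S$ into the subtree $T_j$ or among the other lines at $i$ would perturb $S$ and $\ell$ on lines where \eqref{eq:soc.2} may already be tight, and you have not shown that those SOC constraints survive. Once you adopt the specific direction above, the whole redistribution step disappears and the proof closes immediately. So the fix is not to carry out paragraph~3 more carefully, but to sharpen paragraph~2 so that paragraph~3 is never needed.
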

\begin{remark}
The condition C2 generally holds in practice since shunt admittances are usually much smaller than series admittances
in magnitude (see below) and hence both $|z_{jk}^sy_{jk}^m|$ and $|z_{kj}^sy_{kj}^m|$ are strictly smaller than 1.
\end{remark}

\section{Linear approximation}
The following linear approximation of \eqref{eq:df.1}, called the LinDistFlow model, is proposed in \cite{Baran1989b} for radial networks without line shunts:
\begin{subequations}
\begin{align}
S_{jk}^\lin&=-S_{kj}^\lin=\sum\limits_{l\in\mathbb{T}_{k}}s_l ~\text{for}~j\rightarrow k,\\
v_j^\lin - v_k^\lin &=  2 \text{Re} \left(z_{jk}^* \, S_{jk}^\lin\right) ~\text{for}~j\rightarrow k,
\end{align}
\label{eq:LinDistFlow}
\end{subequations}
where $\mathbb{T}_{k}$ is the set of all the buses downstream of $k$ (including $k$).
In this section, we justify using the same model \eqref{eq:LinDistFlow} as a linear approximation
of the branch flow model \eqref{eq:bfm.10} for radial networks that include line shunts.
While one could include line shunts in a more precise nonlinear model such as \eqref{eq:bfm.10} or  \eqref{eq:bfm.9},
it is reasonable to neglect their effects in a linear approximation and use the conventional LinDistFlow model.

The model \eqref{eq:LinDistFlow} can be obtained from \eqref{eq:bfm.10} by setting $\alpha_{jk} = 1$,
$\ell_{jk} = 0$, and simplifying.  This amounts to assuming:
\begin{enumerate}
\item For each line $j\sim k$, the shunt admittances $|y_{jk}^m|$ and $|y_{kj}^m|$ are negligible so that
	$\alpha_{jk}\approx\alpha_{kj}\approx 1$.
\item The losses on both the series impedance and line shunts are negligible, and thus for $j\rightarrow k$,
	$S_{jk}\approx -s_{k}+\sum_{l:k\rightarrow l}S_{kl}$.
\end{enumerate}
These approximations can be justified as follows.
First, for typical 
 IEEE standard test cases \cite{schneider2017analytic},
the norm of $y_{jk}^m/y_{jk}^s$ is on the order of $10^{-4}{r^2}$ or smaller, implying that
$\alpha_{jk}^m$ and $\alpha_{kj}^m$ are close to $1$.
{Here $r$ is the ratio of power line length to 1 mile and typically on the order of 1 or less.}
Second, for adjacent buses $k$ and $l$, both $|V_k-V_l|/|V_k|$ and $|V_k-V_l|/|V_l|$ are typically on the order of $1\%{r}$.
In the equality
\begin{align}\label{eq:conservation_of_energy}
\nonumber
&\sum_{l:k\rightarrow l} \left( S_{kl}+|V_k-V_l|^2\big(y_{kl}^s\big)^*+|V_k|^2\big(y_{kl}^m\big)^*+|V_l|^2\big(y_{lk}^m\big)^* \right)\\
=&S_{jk}+s_{k},
\end{align}
the term $|V_k-V_l|^2\big(y_{kl}^s\big)^*$ is the loss on the series impedance, while $|V_k|^2\big(y_{kl}^m\big)^*$ and $|V_l|^2\big(y_{lk}^m\big)^*$ are the losses on shunts.
For networks without line shunts, the LinDistFlow model ignores series loss as it typically amounts to around $1\%$ of the 
branch power flow \cite{farivar2013equilibrium}.
When $y_{kl}^m/y_{kl}^s \leq 10^{-4}{r^2}$ and $|V_k-V_l|/|V_k|\approx 1\%{r}$, shunt loss $|V_k|^2\big(y_{kl}^m\big)^*$ is comparable to or 
smaller than the series loss $|V_k-V_l|^2\big(y_{kl}^s\big)^*$.
As a result, both loss terms are negligible, and \eqref{eq:conservation_of_energy} can be approximated as in the second
assumption.

\section{Conclusion}

We have proposed branch flow models \eqref{eq:bfm.10} for radial networks and \eqref{eq:bfm.9} for
general networks 
that allow nonzero shunt elements in the $\Pi$ circuit model.
We have proved (Theorem \ref{thm:equiv}) their equivalence to the bus injection model \eqref{eq:bim.1}.
For radial networks we have proved (Theorem \ref{thm:exactSOCP}) that SOCP relaxation of OPF
is exact when the injections are not lower bounded.
We have justified the use of the original LinDistFlow model as the linear approximation of the
branch flow model \eqref{eq:bfm.10} for radial networks as the effect of shunts is negligible.

\section{Appendix: Proofs}
\label{sec:proofs}

\subsection{Proof of Theorem \ref{thm:equiv}}

\begin{proof}
Let
\bqn
\mathbb{\tilde{X}} :=\{ \tilde{x} := (s, V, I, S) \in\mathbb C^{4(N+2L)} 
				\ \vert \ \tilde{x} \text{ satisfies } \eqref{eq:bfm.7}, V_1 := 1\angle 0^\circ \}
\eqn
For general networks that may contain cycles
we will prove $\mathbb V \equiv \mathbb{\tilde X}$, $\mathbb{\tilde X} \equiv \mathbb{X}_\text{mesh}$, 
and $\mathbb{X}_\text{mesh} \subseteq \mathbb{X}_\text{tree}$.
For radial networks we will prove that $\mathbb{X}_\text{mesh} = \mathbb{X}_\text{tree}$.

\begin{proof}[Proof of $\mathbb V \equiv \mathbb{\tilde X}$]
It is obvious $\mathbb V \equiv \mathbb{\tilde X}$ since given $(s, V)\in \mathbb V$, define $I$ by \eqref{eq:bfm.7c}\eqref{eq:bfm.7d}
and $S$ by \eqref{eq:bfm.7b} and the resulting $(s, V, I, S)\in \mathbb{\tilde X}$.  Conversely given $(s, V, I, S)\in \mathbb{\tilde X}$,
substituting  \eqref{eq:bfm.7b}\eqref{eq:bfm.7c}\eqref{eq:bfm.7d} into  \eqref{eq:bfm.7a} shows $(s, V)\in\mathbb V$.
As both mappings are derived from \eqref{eq:bfm.7}, it is easy to check that they are inverses of each other.
\end{proof}

\begin{proof}[Proof of $\mathbb{\tilde X} \equiv \mathbb{X}_{\rm mesh}$]
To show $\mathbb{\tilde X} \equiv \mathbb X_\text{mesh}$ fix an $\tilde x := (s, V, I, S)\in \mathbb{\tilde X}$.  
Define $(v, \ell)$ by:
\bqn
v_j \ \ := \ \ |V_j|^2  & \text{ and } & \ell_{jk} \ \ := \ \ |I_{jk}|^2
\eqn
We now show that $x := (s, v, \ell, S) \in \mathbb X_\text{mesh}$.  
Clearly \eqref{eq:bfm.10a}\eqref{eq:bfm.10d} follow from \eqref{eq:bfm.7a}\eqref{eq:bfm.7b}.
For \eqref{eq:bfm.10b} rewrite \eqref{eq:bfm.7c} as 
\bqn
V_k & = &  \alpha_{jk}\, V_j \ - \ z^s_{jk} \left( \frac{S_{jk}}{V_j} \right)^*  
\eqn
where we have substituted $I_{jk} := S_{jk}^* / V_j^*$ from \eqref{eq:bfm.7b}.
Taking the squared magnitude on both sides gives
\bqn
v_k & = & \left| \alpha_{jk} \right|^2\, v_j \ + \ \left| z^s_{jk} \right|^2\, \ell_{jk} \ - \ 2\, \text{Re}\left( \alpha_{jk}\, \left( z^s_{jk} \right)^* S_{jk} \right)
\eqn
which is \eqref{eq:bfm.10b} (recall that $z_{jk} := z_{jk}^s$).  Similarly for \eqref{eq:bfm.10c}.
From \eqref{eq:VjVk*.1} and the definitions of $\beta_{jk}(x)$ and $\beta_{kj}(x)$ in \eqref{ch:npfm; eq:Defbeta.1},
we have
\bqn
\beta_{jk}(x)  \ = \  \angle V_j - \angle V_k  \ \ = \ \  - \beta_{kj}(x)
\eqn
and hence \eqref{eq:bfm.9b} holds.  This shows $x\in\mathbb X_\text{mesh}$.
We denote this mapping from $\tilde{x}$ to $x$ as $\phi_1$.

Conversely fix an $x := (s, v, \ell, S) \in \mathbb X_\text{mesh}$.  Define $(V, I)$ as follows.  Pick a $\theta$ that satisfies
\eqref{eq:bfm.9b} with $\theta_1=0$.
\footnote{Since $[C \ -C]$ has rank $N-1$ and the all-ones vector is in its null space, such $\theta$ always uniquely exists.}
Let
\bqn
V_j & := & \sqrt{v_j}\, e^{\theta_j}, \quad j\in  N 
\eqn
Define $I_{jk}$ and $I_{kj}$ in terms of $V$ according to \eqref{eq:bfm.7c} and \eqref{eq:bfm.7d} respectively for
all lines $(j,k)\in L$.  To show that $\tilde x := (s, V, I, S)\in \mathbb{\tilde X}$ it suffices to show \eqref{eq:bfm.7b}
holds.  Since \eqref{eq:bfm.7c} holds by construction we have
\bqn
V_j I_{jk}^* & = &  y^*_{jk} (v_j - V_jV_k^*) \ + \ \left( y^m_{jk} \right)^* v_j
\\
	& = & y_{jk}^* \left( \alpha_{jk}^*\, v_j - \sqrt{v_j v_k} \, e^{\ii (\theta_j - \theta_k)} \right)
\\
	& = & y_{jk}^* \left( \left( \alpha_{jk}^*\, v_j - z_{jk}^* S_{jk} \right) - \sqrt{v_j v_k} \, e^{\ii (\theta_j - \theta_k)} \right)
		\ + \ S_{jk}
\\
	& = & S_{jk}
\eqn
as desired, if 
\bq
\left( \alpha_{jk}^*\, v_j - z_{jk}^* S_{jk} \right) & = & \sqrt{v_j v_k} \, e^{\ii (\theta_j - \theta_k)}
\label{eq:a*v-z*S}
\eq
First note that \eqref{ch:npfm; eq:Defbeta.1} and \eqref{eq:bfm.9b} imply that
both sides of \eqref{eq:a*v-z*S} have the same phase angle.
We now show that both sides have the same magnitude as well.  Indeed
\bqn
& & \left| \alpha_{jk}^*\, v_j \ - \  z_{jk}^*\, S_{jk} \right|^2
\\
 & = & 
\left| \alpha_{jk} \right|^2\, v_j^2 + \left| z_{jk} \right|^2\, |S_{jk}|^2 - 2\, \text{Re}\left( \alpha_{jk}\, z_{jk}^*\, v_j\, S_{jk} \right)
\\
& = & v_j \, v_k
\eqn
where the last equality follows from multiplying both sides of \eqref{eq:bfm.10b} by $v_j$ and then 
substituting $|S_{jk}|^2 = v_j \ell_{jk}$ from \eqref{eq:bfm.10d}.
Hence \eqref{eq:a*v-z*S} holds and $\tilde x := (s, V, I, S)$ satisfies \eqref{eq:bfm.7}
(the proof that $V_k I_{kj}^* = S_{kj}$ is similar).
{We denote the mapping from $x$ to $\tilde{x}$ as $\phi_2$. It is easy to check that $\phi_1(\phi_2(x))=x$ and $\phi_2(\phi_1(\tilde{x}))=\tilde{x}$, 
thus $\phi_1$ and $\phi_2$ are inverses of each other and there is a bijection between $\mathbb{\tilde X}$ and $\mathbb X_\text{mesh}$.}
This completes the proof that $\mathbb{\tilde X} \equiv \mathbb X_\text{mesh}$.
\end{proof}

\begin{proof}[Proof of $\mathbb X_{\rm mesh} \subseteq \mathbb X_{\rm tree}$]
Suppose $x\in\mathbb X_\text{mesh}$ and hence satisfies \eqref{eq:bfm.9}.  
The proof of \eqref{eq:a*v-z*S} also shows that
\bqn
\left( \alpha_{kj}^*\, v_k - z_{kj}^* S_{kj} \right) & = & \sqrt{v_k v_j} \, e^{\ii (\theta_k - \theta_j)}
\eqn
This together with \eqref{eq:a*v-z*S} implies \eqref{eq:bfm.10e}.
Hence $x\in\mathbb X_\text{tree}$.
\end{proof}

\begin{proof}[Proof of $\mathbb X_{\rm mesh} = \mathbb X_{\rm tree}$ for radial networks]
Suppose~$x \in \mathbb X_\text{tree}$ and hence satisfies \eqref{eq:bfm.10}.
We now show that $x$ satisfies \eqref{eq:bfm.9b} when the network is radial.   
Recall that, by construction, the first half $\tilde\beta(x)$ of $\beta(x)$ correspond to lines 
in the same directions as specified by the incidence matrix $C$.  
The condition \eqref{eq:bfm.10e} implies that $\beta(x) = [\tilde\beta^T(x) \ -\tilde\beta^T(x)]^T$.
It is well-known that $C$ has rank $N-1$ and $L = N-1$ for a (connected) radial network.  
Hence $C$ has full column rank and a solution $\theta$ to $\tilde \beta(x) = C^T\theta$
always exists and is given by:
\bqn
\theta & = & C \left( C^T C \right)^{-1}\, \tilde \beta(x)
\eqn
Therefore 
\bqn
\beta(x) \ \ = \ \ \begin{bmatrix} \tilde\beta(x) \\ - \tilde\beta(x) \end{bmatrix}
	& = & \begin{bmatrix} C^T \\ - C^T \end{bmatrix}\, \theta
\eqn
which is condition \eqref{eq:bfm.9b}.  Hence $x\in\mathbb X_\text{mesh}$.
\end{proof}

This completes the proof of Theorem \ref{thm:equiv}.
\end{proof}

\subsection{Proof of Theorem \ref{thm:exactSOCP}}
\begin{proof}
If the optimal solution $\hat{x}=(\hat{s},\hat{v},\hat{\ell},\hat{S})$ to \eqref{eq:socp.1} is not in $\mathbb X_\text{tree}$, then there must exist $j\sim k$ such that $|\hat{S}_{jk}|^2<\hat{v}_j \hat{\ell}_{jk}$ and therefore by Lemma \ref{lem:Svl},  $|\hat{S}_{kj}|^2<\hat{v}_k \hat{\ell}_{kj}$ also holds.
Following \cite{Farivar-2013-BFM-TPS} we now construct another point $\tilde{x}=(\tilde{s},\tilde{v},\tilde{\ell},\tilde{S})$ 
that $\tilde x$ is feasible and has a strictly lower cost, contradicting the optimality of $\hat x$. 
Let
\begin{align*}
\tilde{v}&=\hat{v},&&\\
\tilde{\ell}_{jk}&=\hat{\ell}_{jk}-\re(\alpha_{jk})\varepsilon,& \tilde{\ell}_{kj}&=\hat{\ell}_{kj}-\re(\alpha_{kj})\varepsilon,\\
\tilde{S}_{jk}&=\hat{S}_{jk}-z_{jk}\varepsilon/2,& \tilde{S}_{kj}&=\hat{S}_{kj}-z_{kj}\varepsilon/2,\\
\tilde{s}_j&=\hat{s}_j-z_{jk}\varepsilon/2,& \tilde{s}_k&=\hat{s}_k-z_{kj}\varepsilon/2.
\end{align*}
All the other entries of $\tilde{s},\tilde{\ell},\tilde{S}$ not listed above take the same values as in $\hat{s},\hat{\ell},\hat{S}$.

By A2, $f(\tilde{x})<f(\hat{x})$ for any $\varepsilon>0$. If there exists an $\varepsilon>0$ such that $\tilde{x}$ 
also satisfies \eqref{eq:socp.1}, then it contradicts the optimality of $\hat{x}$.
The feasibility of \eqref{eq:socp.1b} follows from  C1 and $z_{jk}=z_{kj}\geq0$, and \eqref{eq:socp.1c} naturally holds as $\tilde{v}=\hat{v}$.
Since $\ell_{jk}$ and $\ell_{kj}$ are both strictly positive, and C2 holds, there exists a sufficiently small $\varepsilon>0$ such that $\hat{\ell}\geq\tilde{\ell}\geq0$ and \eqref{eq:socp.1d} holds.

To show $\tilde{x}\in{\mathbb X}_\text{soc}$, one needs to prove $\tilde{x}$ satisfies \eqref{eq:bfm.10a}\eqref{eq:soc.2} and \eqref{eq:bfm.10b}--\eqref{eq:bfm.10e}.
For \eqref{eq:bfm.10a} at nodes $j$ and $k$, we have
\begin{align*}
\tilde{s}_j &= -z_{jk}\frac{\varepsilon}{2} + \hat{s}_j = -z_{jk}\frac{\varepsilon}{2} + \sum_{l: j\sim l} \hat{S}_{jl} = \sum_{l: j\sim l} \tilde{S}_{jl},\\
\tilde{s}_k &= -z_{kj}\frac{\varepsilon}{2} + \hat{s}_k = -z_{kj}\frac{\varepsilon}{2} + \sum_{l: k\sim l} \hat{S}_{kl} = \sum_{l: k\sim l} \tilde{S}_{kl}.
\end{align*}
Hence $\tilde{x}$ satisfies \eqref{eq:bfm.10a} at nodes $j$ and $k$.

For \eqref{eq:bfm.10b} and \eqref{eq:bfm.10c} across line $j\sim k$, we have
\begin{align*}
&|\alpha_{jk}|^2 \tilde{v}_j - \tilde{v}_k = |\alpha_{jk}|^2 \hat{v}_j - \hat{v}_k\\
= &2\, \re (\alpha_{jk}\, z_{jk}^* \hat{S}_{jk}) - | z_{jk} |^2 \hat{\ell}_{jk}\\
= &2\, \re (\alpha_{jk}\, z_{jk}^* (\tilde{S}_{jk}+z_{jk}\varepsilon/2)) - | z_{jk} |^2 (\tilde{\ell}_{jk}+\re(\alpha_{jk}) \epsilon)\\
= &2\, \re (\alpha_{jk}\, z_{jk}^* \tilde{S}_{jk}) - | z_{jk} |^2 \tilde{\ell}_{jk}.
\end{align*}
Likewise, $|\alpha_{kj}|^2 \tilde{v}_k - \tilde{v}_j = 2\, \re (\alpha_{kj}\, z_{kj}^* \tilde{S}_{kj}) - | z_{kj} |^2 \tilde{\ell}_{kj}$ also holds. Hence $\tilde{x}$ satisfies \eqref{eq:bfm.10b} and \eqref{eq:bfm.10c} across line $j\sim k$.

For \eqref{eq:bfm.10e} across line $j\sim k$, we have
\begin{align*}
&\alpha_{jk}^* \tilde{v}_j -  z_{jk}^* \tilde{S}_{jk}  = \alpha_{jk}^* \hat{v}_j -  z_{jk}^* \left(\hat{S}_{jk}-z_{jk}\frac{\varepsilon}{2} \right)\\
=&(\alpha_{jk}^* \hat{v}_j -  z_{jk}^* \hat{S}_{jk}) + |z_{jk}|^2\frac{\varepsilon}{2}\\
=&(\alpha_{kj}^* \hat{v}_k -  z_{kj}^* \hat{S}_{kj})^* + |z_{kj}|^2\frac{\varepsilon}{2}\\
=&\left(\alpha_{kj}^* \hat{v}_k -  z_{kj}^* \left(\hat{S}_{kj}-z_{kj}\frac{\varepsilon}{2}\right)\right)^* \\
=&( \alpha_{kj}^* \tilde{v}_k - z_{kj}^* \tilde{S}_{kj} )^*.
\end{align*}
Hence $\tilde{x}$ satisfies \eqref{eq:bfm.10e} across line $j\sim k$.

For \eqref{eq:soc.2} across line $j\sim k$, since $| \hat{S}_{jk}|^2   <  \hat{v}_j \, \hat{\ell}_{jk}$, we have
\begin{align*}
&\tilde{v}_j \, \tilde{\ell}_{jk} - | \tilde{S}_{jk}|^2 =  \hat{v}_j \, \left(\hat{\ell}_{kj}-\re(\alpha_{kj})\varepsilon\right)- \left|\hat{S}_{jk}-z_{jk}\frac{\varepsilon}{2}\right|^2\\
=&\hat{v}_j \, \hat{\ell}_{jk} - | \hat{S}_{jk}|^2 - \varepsilon\left(\re(\alpha_{kj}\hat{v}_j-\hat{S}_{jk}z_{jk}^*) + \frac{|z_{jk}|^2}{4}\varepsilon\right)
\end{align*}
is positive for sufficiently small $\epsilon$. Likewise, $| \tilde{S}_{jk}|^2   <  \tilde{v}_j \, \tilde{\ell}_{jk}$. Hence $\tilde{x}$ satisfies \eqref{eq:soc.2} across line $j\sim k$.
\end{proof}

\bibliographystyle{unsrt}
\bibliography{PowerRef-201202}

\end{document}